\newtheorem{theorem}{Theorem}[section]
\newtheorem{proposition}[theorem]{Proposition}
\newtheorem{lemma}[theorem]{Lemma}
\theoremstyle{definition}
\theoremstyle{definition}
\newcommand\HFKh{\widehat{HFK}}
\newcommand\HFKt{\widetilde{HFK}}
\newcommand\CFKh{\widehat{CFK}}
\newcommand\CFKt{\widetilde{\mathrm{CFK}}}
\newcommand\alphas{\mbox{\boldmath$\alpha$}}
\newcommand\betas{\mbox{\boldmath$\beta$}}
\newcommand\gammas{\mbox{\boldmath$\gamma$}}
\newcommand\Sym{{\rm {Sym}}}
\newcommand\ws{\mathbf w}
\newcommand\zs{\mathbf z}
\newcommand\FF{\mathbb F}
\newcommand\M{\mathcal M}
\newcommand\TT{\mathbb{T}}
\newcommand\SH{\mathcal{H}}
\def\x{\mathbf{x}}
\def\y{\mathbf{y}}
\def\z{\mathbf{z}}
\def\w{\mathbf{w}}
\newcommand{\cM}{\mathcal{M}}
\newcommand{\as}{\mathbf{a}}
\newcommand{\bbs}{\mathbf{b}}
\newcommand{\RR}{\mathbb{R}}
\newcommand{\ZZ}{\mathbb{Z}}
\newcommand{\cN}{\mathcal{N}}
\begin{document}

\title[Transverse braids and combinatorial knot Floer homology]{Transverse braids and combinatorial knot Floer homology}

\author[P. Lambert-Cole]{Peter Lambert-Cole}
\address{Department of Mathematics \\ Indiana University}
\email{pblamber@indiana.edu}
\urladdr{\href{https://www.pages.iu.edu/~pblamber/}{https://www.pages.iu.edu/\~{}pblamber}}

\author[D.S. Vela-Vick]{David Shea Vela-Vick}
\address{Department of Mathematics \\ Louisiana State University}
\email{shea@math.lsu.edu}
\urladdr{\href{https://www.math.lsu.edu/~shea/}{https://www.math.lsu.edu/\~{}shea}}

\keywords{Heegaard Floer homology, contact structures, transverse knots}
\subjclass[2010]{57M27; 57R58, 57R17}
\maketitle


\begin{abstract}
We describe a new method for combinatorially computing the transverse invariant in knot Floer homology. Previous work of the authors and Stone used braid diagrams to combinatorially compute knot Floer homology of braid closures.  However, that approach was unable to explicitly identify the invariant of transverse links that naturally appears in braid diagrams.  In this paper, we improve the previous approach in order to compute the transverse invariant.  We define a new combinatorial complex that computes knot Floer homology and identify the BRAID invariant of transverse knots and links in the homology of this complex.
\end{abstract}

\section{Introduction} 
\label{sec:introduction}

Every link $L$ transverse to the standard contact structure on $S^3$ is the closure of braid $\rho$ that is unique up to conjugation and positive stabilization \cite{Be,Wr}.  For any braid $\rho \in B_n$, there is a natural multi-pointed Heegaard diagram $\SH_\rho$ associated to the corresponding braid closure. This braid diagram is a classic Heegaard decomposition of the link complement, modified to encode the braiding.  The diagram $\SH_{\rho}$ determines a bigraded complex $\CFKh(\SH_{\rho})$ whose homology is the knot Floer homology of the mirror of the braid closure.  In addition, the diagram determines a closed generator $\x[\SH_{\rho}]$ whose homology class in $\HFKh(\SH_{\rho})$ is an invariant of the transverse link determined by $\rho$.  This class $t(L)$ is the BRAID invariant of the transverse link $L$, introduced by Baldwin,  V\'ertesi, and the second author \cite{BVV}. 

BRAID is equivalent to two other powerful transverse link invariants arising in knot Floer homology, GRID and LOSS \cite{BVV}.   Ozsv\'ath, Szab\'o, and Thurston introduced the GRID invariant, which takes values in the grid version of knot Floer homology \cite{OSzT} and is easily computed for knots with small grid number. Lisca, Ozsv\'ath, Stipsicz, and Szab\'o then used open book decompositions to define an invariant, commonly referred to as LOSS, of (null-homologous) Legendrian and transverse knots in arbitrary 3--manifolds \cite{LOSS}.  These invariants have been successfully applied to distinguish and classify transverse representatives of various knot types.  

The BRAID invariant possesses advantages over its predecessors. Most notably, it is a manifestly transverse invariant as it is defined in terms of transverse knots and links that are braided about open books.  When the grid size, or equivalently arc index, exceeds the high teens, GRID cannot be efficiently computed.  As the braid complexity of a transverse link is relatively independent of arc index, BRAID potentially expands our ability to effectively distinguish transverse knots.

Together with Stone \cite{LSV}, the authors used this construction to give a new, combinatorial method for computing knot Floer homology for knots and links in $S^3$.  By a sequence of stabilizations and isotopies, the diagram $\SH_{\rho}$ is modified to $\SH_{nice}$.  This diagram is nice in the sense of Sarkar and Wang \cite{SW} and thus the differential on the complex $\CFKt(\SH_{nice})$ can be computed explicitly.  Pseudo-holomorphic curve techniques give a chain homotopy equivalence $F: \CFKt(\SH_{\rho}) \rightarrow \CFKt(\SH_{nice})$.  Thus, the bigraded ranks of $\HFKt$ of the braid closure can be computed combinatorially.  However, the chain map $F$ itself, and in particular the image of the BRAID invariant, could not be computed explicitly. 

In this paper, we describe an algebraic method to compute BRAID and prove the following theorem.

\begin{theorem}
Let $\rho \in B_n$ be a braid with transverse link closure $L$.  There is an associated complex $C(\SH_{\rho})$ that is combinatorially computable and an isomorphism on homology
\[
	(GF)_*: \HFKt(\SH_{\rho}) \rightarrow H_*(C(\SH_{\rho}))
\]
This complex supports a canonical generator $\x[\SH_{\rho}]$ such that
\[
	(GF)_*(t(L)) = \left[ \x[\SH_{\rho}] \right]
\]
\end{theorem}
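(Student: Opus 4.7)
The core difficulty, as flagged in \cite{LSV}, is that the quasi-isomorphism $F : \CFKt(\SH_{\rho}) \to \CFKt(\SH_{nice})$ arising from the sequence of Heegaard moves taking the braid diagram to the nice diagram counts pseudo-holomorphic triangles and is not available combinatorially. My plan is therefore to postcompose $F$ with an explicit combinatorial chain map $G : \CFKt(\SH_{nice}) \to C(\SH_{\rho})$ chosen so that the composition $GF$ becomes computable at the chain level, even though $F$ by itself is not.

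First, I would reorganize the sequence of Heegaard moves used in \cite{LSV}, separating the steps that admit canonical combinatorial chain-level descriptions (typically stabilizations, destabilizations, and suitably trivial isotopies) from the handleslides that genuinely require triangle counts. Through the former, the BRAID generator $\x[\SH_{\rho}]$ pulls back to a distinguished generator of each intermediate diagram, so the non-combinatorial behavior of $F$ is concentrated at a controlled number of handleslides, and a natural candidate class $[\x[\SH_{\rho}]]$ in $C(\SH_{\rho})$ emerges from the combinatorial part of the trace.

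Next, I would build $C(\SH_{\rho})$ and $G$ by an algebraic reduction. Specifically, I would exhibit an acyclic subcomplex $A \subset \CFKt(\SH_{nice})$ generated by combinatorially certified cancellations (for instance, matchings produced by unique embedded bigons with mod-$2$ count one, or by a filtration argument with an acyclic graded summand), set $C(\SH_{\rho}) := \CFKt(\SH_{nice})/A$, and let $G$ be the projection. Acyclicity of $A$ gives a combinatorial chain contraction, so $G$ is a quasi-isomorphism; combined with the quasi-isomorphism $F$ from \cite{LSV}, this yields the isomorphism $(GF)_*$ on homology.

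The main obstacle is calibrating $A$ so that it is large enough to absorb every uncomputed triangle contribution to $F(\x[\SH_{\rho}])$, yet not so large that $(GF)_*(t(L))$ becomes trivial. This amounts to a detailed analysis of the domains contributing to the triangle count: I would argue via positivity and the combinatorics of the nice diagram that every such domain either contributes the expected leading term $\x[\SH_{\rho}]$ or is supported entirely inside $A$, so that the ``non-combinatorial'' tail of $F(\x[\SH_{\rho}])$ dies under projection. Once this compatibility is established, the identification $(GF)_*(t(L)) = [\x[\SH_{\rho}]]$ holds at the chain level, and the theorem follows.
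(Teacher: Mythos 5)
Your overall strategy is the paper's: rather than compute $F:\CFKt(\SH_{\rho})\to\CFKt(\SH_{nice})$, you run the reduction backwards, cancelling the extra generators of the nice diagram in acyclic pairs certified by embedded bigons, so that the composite $GF$ lands in a combinatorial complex supported on the braid-diagram generators, and you use positivity of domains to pin down the image of the transverse class. This is exactly how the paper builds the complexes $C_{i,j}$ and the maps $G_{i,j}$, and how Proposition~\ref{prop:GF-transverse} identifies $(GF)(\x[\SH_{\rho}])$.

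There is, however, a genuine gap at the step where you ``exhibit an acyclic subcomplex $A$ generated by the matched pairs, set $C(\SH_{\rho}):=\CFKt(\SH_{nice})/A$, and let $G$ be the projection.'' The span of the matched generators $\{\x_{i,j},\x'_{i,j}\}$ is not a subcomplex (there are differentials out of these generators into the rest of the complex), so the quotient and the projection are not defined as stated; the correct mechanism is iterated Gaussian elimination, which replaces the projection by $\mathrm{Id}+d_{CD}\circ H$ and twists the differential on the complement at every stage. More importantly, once you cancel one pair the differential changes, and nothing in your proposal guarantees that the \emph{next} bigon pair still has coefficient $1$ in the twisted differential --- if some twisted arrow $\x_{i,j}\to\x'_{i,j}$ vanished, the whole reduction would halt and $C(\SH_{\rho})$ would not have the advertised underlying module. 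This is the technical heart of the paper: Lemma~\ref{lemma:bigon-arrow} shows $\langle d_{i,j}(\x_{i,j}),\x'_{i,j}\rangle=1$ at every stage, and its proof requires both the positivity statement for the modified counts (Lemma~\ref{lemma:positive}, itself an induction using the fact that every $\alpha$-curve abuts $z$-basepointed regions) and an admissibility argument showing the bigon is the \emph{unique} positive domain in $\pi_2(\x_{i,j},\x'_{i,j})$ because the stabilization curves are homologically independent. You would need to supply this inductive verification (or an equivalent global triangularity statement for your matching) before the construction of $C(\SH_{\rho})$, and hence the identification of $(GF)_*(t(L))$, goes through.
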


Specifically, we show how to identify the transverse invariant from the complex $\CFKt(\SH_{nice})$ without computing the chain map $F$.  Starting with the pair of multi-pointed Heegaard diagrams $\SH_\rho$ and $\SH_{nice}$, we define a new chain complex $C(\SH_{\rho})$.  This new chain complex is homotopy equivalent to $\CFKt(\SH_{\rho})$ and possesses the same underlying module as $\CFKt(\SH_{\rho})$.   The differential is defined analogously to that of $\CFKt(\SH_{\rho})$ by assigning counts to Whitney disks via the rule
\[
	d(\x) \coloneqq \sum_{\y \in \TT_{\alpha} \cap \TT_{\beta}} \sum_{ \substack{ \phi \in \pi_2(\x,\y) \\ \mu(\phi) = 1 \\ n_z(\phi) = n_w (\phi) = 0}} \cN(\phi) \cdot \y.
\]
However, the count $\cN(\phi)$ of representatives of a domain $\phi$ is determined combinatorially by the differential on $\CFKt(\SH_{nice})$, instead of geometrically by counting pseudo-holomorphic representatives. The differential on $\CFKt(\SH_{nice})$ determines a chain-homotopy equivalence $G: \CFKt(\SH_{nice}) \to C(\SH_\rho)$ as well.  The resulting composition $GF$ can similarly be interpreted as a standard triangle map defined by assigning modified counts to Whitney triangles.  Importantly, while we cannot identify the image of $\x[\SH_{\rho}]$ under $F$ itself, in Proposition~\ref{prop:GF-transverse} we are able to determine its image under the composition $GF$, showing
\begin{equation}
GF(\x[\SH_{\rho}]) = \x[\SH_{\rho}]
\end{equation}
As a consequence, the transverse invariant $t(L)$ is combinatorially computable directly from the braid.

\subsection*{Acknowledgements} 
\label{sub:acknowledgements}
We would like to acknowledge the National Science Foundation and Louisiana State University for sponsoring the 2012 LSU Research Experience for Undergraduates (NSF Grant DMS-1156663).  The mathematics presented here originated as an offshoot of this program.  Vela-Vick would also like to acknowledge partial support from NSF Grant DMS-1249708.

\section{Preliminaries} 
\label{sec:preliminaries}
In what follows, we assume familiarity with knot and braid theory, as well as elementary aspects of contact geometry and Legendrian and transverse links. The interested reader is encouraged to consult Birman's book \cite{Bi} and Etnyre's notes \cite{Et} for comprehensive introductions to braid theory and to Legendrian and transverse knot theory.


\subsection{Knot Floer homology} 
\label{sub:hfk}
We begin by summarizing some basic definitions and results concerning knot Floer homology. We refer the reader to the papers by Ozsv\'ath and Szab\'o \cite{OS3} and Rasmusen \cite{Ra} for a more in-depth discussion of this material.   Throughout this manuscript, we work with $\FF = \ZZ/2\ZZ$--coefficients. 

Recall that to each oriented link $K$ in the 3--sphere, one can associate a {\it multi-pointed Heegaard diagram} $\SH = (\Sigma,\alphas,\betas,\zs,\ws)$.  In this case, the triple $(\Sigma,\alphas,\betas)$ specifies a Heegaard diagram for $S^3$ and the link $K$ is obtained from the basepoints $\zs = \{z_i\}$ and $\ws = \{w_i\}$ by connecting the $z$ to $w$-basepoints and $w$ to $z$-basepoints by properly embedded arcs in the $\alpha$ and $\beta$-handlebodies respectively which avoids the compression disks specified by the curves in $\alphas$ and $\betas$.  More generally, a {\it multi-pointed Heegaard triple} $\SH = (\Sigma,\alphas,\betas,\gammas,\zs,\ws)$ is a collection of three sets of curves $\alphas,\betas,\gammas$ such that each pair $(\alphas,\betas),(\alphas,\gammas),$ and $(\betas,\gammas)$ determines multi-pointed Heegaard diagrams.  

The collections $\alphas$ and $\betas$ specify tori $\TT_\alpha$ and $\TT_\beta$ in $\Sym^{g+n-1}(\Sigma)$. The complex $\CFKt(\SH)$ is the $\FF$-vector space freely generated by the intersections in $\TT_\alpha \cap \TT_\beta$. For each Whitney disk $\phi \in \pi_2(\x,\y)$, we let $n_{z_i}(\phi)$ and $n_{w_i}(\phi)$ denote the local multiplicity of $\phi$ at $z_i$ and $w_i$ respectively.  We denote by $n_{\zs}(\phi)$ and $n_{\ws}(\phi)$ the sums of the local multiplicities at all of the $z$ and $w$-basepoints respectively. The chain group can be endowed with two absolute gradings, the Maslov (homological) grading $M(\x)$ and Alexander grading $A(\x)$, which are determined up to an overall shift by the formulas
\[
	M(\x) - M(\y) = \mu(\phi) - 2n_{\ws}(\phi) \;\;\; \text{and} \;\;\; A(\x) - A(\y) = n_{\zs}(\phi) - n_{\ws}(\phi),
\]
where $\phi \in \pi_2(\x,\y)$ and $\mu(\phi)$ denote the Maslov index of the Whitney disk $\phi$.  The differential on the complex $\CFKt(\SH)$ is defined by
\[
	\widetilde{\partial}(\x) = \sum_{\y \in \TT_\alpha \cap \TT_\beta} \sum_{\substack{\phi \in \pi_2(\x,\y),\\ \mu(\phi) = 1, \\ n_{\zs}(\phi) = n_{\ws}(\phi) = 0}} \# \widehat{\M}(\phi) \cdot \y.
\]
The {\it tilde} version of knot Floer homology is then
\[
	\HFKt(K) := H_*(\CFKt(\SH),\widetilde{\partial}),
\]
and is an invariant of the link $K$ and the number of $z$ or $w$-basepoints. Its relation to the {\it hat} version of knot Floer homology is given by
\[
	\HFKt(K) = \HFKh(K) \otimes V^{\otimes n},
\]
where $V$ is a 2--dimensional vector space supported in bi-gradings $(0,0)$ and $(-1,-1)$.

Let $\Pi_{\alpha,\beta}$ denote the group of {\it periodic domains} in the Heegaard diagram $\SH = (\Sigma,\alphas,\betas,\zs,\ws)$.  Recall that a 2-chain is periodic if its boundary is the union of some number of $\alpha$ and $\beta$ curves.  Let $\Pi^0_{\alpha,\beta}$ denote the subgroup of periodic domains that avoid $\ws \cup \zs$.  As a group, $\Pi^0_{\alpha,\beta}$ is isomorphic to $H^2(S^3 \setminus K; \ZZ)$.  The Heegaard diagram $\SH$ is {\it admissible} if every domain in $\Pi^0_{\alpha,\beta}$ has both positive and negative multiplicities.  The groups $\Pi_{\alpha,\beta,\gamma}, \Pi^0_{\alpha,\beta,\gamma}$ of periodic domains and admissibility are defined similarly for a triple $\alpha,\beta,\gamma$.

Finally, given an admissible Heegaard triple $\SH = (\Sigma,\alphas,\betas,\gammas,\zs,\ws)$, there is an induced chain map $F: \CFKt(\SH_{\alpha,\beta}) \otimes \CFKt(\SH_{\beta,\gamma}) \rightarrow \CFKt(\SH_{\alpha,\gamma})$ defined as
\[F(\x_1 \otimes \x_2) \coloneqq \sum_{\y \in \TT_{\alpha} \cap \TT_{\beta}} \sum_{\substack{\psi \in \pi_2(\x_1,\x_2,\y) \\ \mu(\psi) = 0 \\ n_z(\psi) = n_w(\psi) = 0}} \# \widehat{\cM}(\psi) \cdot \y\]


\subsection{Combinatorial computations} 
\label{sub:combinatorial_computations}

In \cite{LSV}, Stone and the authors described an algorithm for combinatorially computing knot Floer homology. The algorithm begins with a braid presentation of a given link $K$ and produces an explicit, nice multi-pointed Heegaard diagram in the sense of Sarkar and Wang \cite{SW}. We outline below the construction from \cite{LSV}.

A multi-pointed Heegaard diagram $\SH$ is {\it nice} if every region in $\Sigma \backslash (\alphas \cup \betas)$ which does not containing a basepoint is topologically a disk with at most four corners.  In other words, every region in the complement of the $\alpha$ and $\beta$-curves either contains a $z$-basepoint, or is a bigon or square. If a multi-pointed Heegaard diagram $\SH$ is nice, the differential on $\CFKt(\SH)$ can be computed combinatorially by counting embedded, empty rectangles and bigons connecting generators \cite{SW}.  

There is a well-known isomorphism between the $n$-stranded braid group $B_n$ and the mapping class group $MCG(D,n)$ of the disk with $n$ marked points. Let $(D,n)$ denote the unit disk in $\RR^2$ with $n$ evenly spaced marked points $\z = \{z_1,\dots,z_n\}$ along the horizontal axis. The isomorphism identifies the $i^{th}$ standard generator $\sigma_i$ of the Artin braid group with the positive half-twist about the horizontal arc joining the $i^{th}$ and $(i+1)^{st}$ marked points on $D$.

\begin{figure}[htpb]
\labellist
	\hair 2pt
	\small
	\pinlabel $\gamma$ at 40 70
	\pinlabel $\gamma$ at 185 70
\endlabellist
\begin{center}
\includegraphics[scale=1.0]{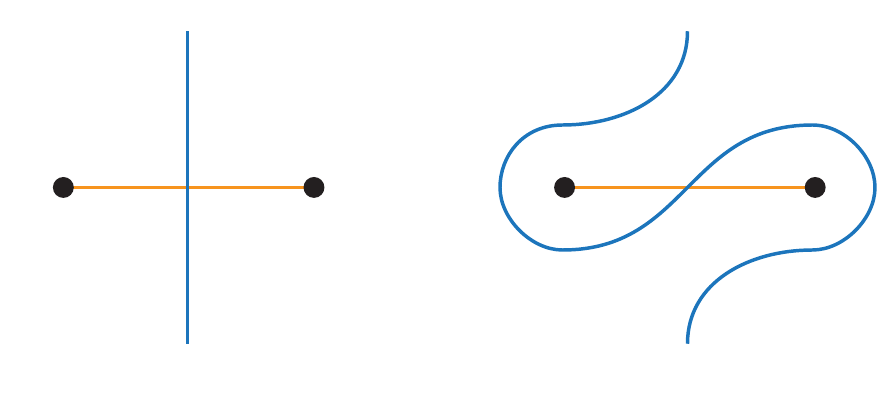}
\caption{A Dehn half-twist $\tau_\gamma$ about the arc $\gamma$.}
\label{fig:half_twist}
\end{center}
\end{figure}

Let $\as = \{a_1,\dots,a_{n-1}\}$ denote an arc-basis for $(D,\z)$ consisting of $n-1$ vertical arcs, such that each component of $D \backslash \as$ contains exactly one of the $z$-basepoints. Next, let $\bbs$ denote a second arc-basis which is obtained from $\as$ by applying small isotopies which shift the endpoints of the $a_i$ along the orientation of $\partial D$ and results in a single transverse intersection $x_i = a_i \cap b_i$.

Take a second copy $D'$ of the disk $D$ with identical basepoints $\w = \{w_1,\dots,w_n\}$. For a homeomorphism $\rho: (D,\z) \to (D,\z)$, we endow $D'$ with two arc bases: the first $\as'$ is identical to $\as$, while the second $\bbs'$ is obtained from $\bbs$ by setting $b'_i \coloneqq \rho(b_i)$.  By perturbing, if necessary, we can assume that the $a'_i$ and $b'_j$ meet transversally in a single point.  We then obtain an admissible, multi-pointed Heegaard diagram $\SH_\rho = (\Sigma ,\betas_{\rho},\alphas_{\rho},\zs,\ws)$ by setting $\Sigma = D \cup -D'$, $\alpha_i = a_i \cup a'_i$ and $\beta_i = b_i \cup b'_i$. 

Note that in the definition of $\SH_{\rho}$, we have interchanged the roles of the $\alpha$ and $\beta$-curves. Topologically, this has the effect of reversing the orientation of the ambient manifold --- in this case, the orientation of $S^3$. On the level of knot Floer homology groups, we have
\[
	\HFKt(\SH_{\rho}) \cong \HFKt(-S^3,K) \cong \HFKt(S^3,m(K)).
\]

We call a homeomorphism $\rho$ {\it efficient} if it minimizes intersections amongst the $a'_i$ and $b'_j$ within its mapping class. Such maps give rise to multi-pointed Heegaard diagrams which we also call {\it efficient}, and which are very close to being nice: they contain at most $n-1$ bad regions, each with six sides.  To convert an efficient, multi-pointed Heegaard diagram into a nice one, we apply a ``stabilization trick'' that was first described in \cite{Hales}. This trick consists of two steps:
\begin{enumerate}
	\item[(1)] For each 6-sided bad region $R$ in $\SH_\rho$, stabilize $\SH_\rho$ as in Figure~\ref{fig:stabilizedA} by attaching a 1--handle to $\Sigma$ with one foot in $R$ and another in a region containing a $z$-basepoint.
	\item[(2)] Isotope the new $\beta$-curves as in Figure~\ref{fig:stabilizedB} by applying finger moves across the $\alpha$-edges until reaching regions containing basepoints.
\end{enumerate}
The resulting diagram $\SH_{nice}$ after stabilizing and isotoping is nice \cite[Proposition 3.3]{LSV}.

\begin{figure}[htpb]
\begin{center}
\begin{subfigure}{.45\textwidth}
	\centering
	\labellist
		\small\hair 2pt
		\pinlabel $\widehat{\alpha_i}$ at 51 103
		\pinlabel $\widehat{\beta_i}$ at 68 87
		\pinlabel $z_i$ at 25 150
	 	\pinlabel $w_i$ at 8 81
		\pinlabel $\alpha_i$ at -7 25
	 	\pinlabel $\alpha_{i+1}$ at 85 25
		\pinlabel $\widehat{t_i}$ at 43 75
	\endlabellist
	\includegraphics[scale=1.0]{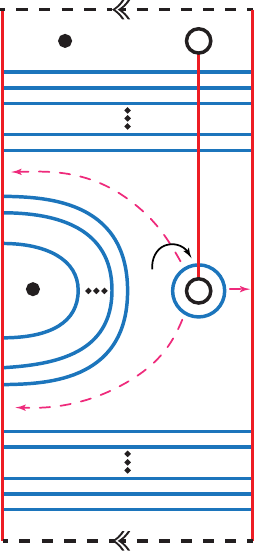}
	\caption{The multi-pointed Heegaard diagram after stabilizing. Finger moves are applied as indicated by the (pink) dashed lines.}
	\label{fig:stabilizedA}
\end{subfigure}
\hspace{0.3cm}
\begin{subfigure}{.45\textwidth}
	\centering
	\labellist
		\small\hair 2pt
		\pinlabel $\widehat{\alpha_i}$ at 80 76
		\pinlabel $\widehat{\beta_i}$ at 60 90
		\pinlabel $z_j$ at 25 150
	 	\pinlabel $w_k$ at 8 81
		\pinlabel $\beta_j$ at -7 25
	 	\pinlabel $\beta_{j+1}$ at 85 25
		\pinlabel $\widehat{t_i}$ at 57 60
	\endlabellist
	\includegraphics[scale=1.0]{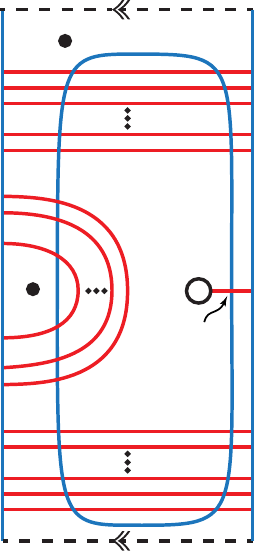}
	\caption{The multi-pointed Heegaard diagram $\SH_{nice}$, viewed from the ``dual'' perspective with $\beta$-curves appear visually straightened.}
	\label{fig:stabilizedB}
\end{subfigure}
\caption{The Stabilization Trick}
\label{fig:stabilization}
\end{center}
\end{figure}


\subsection{The transverse invariant} 
\label{sub:transverse}
Let $L$ be a transverse link in $(S^3,\xi_{std})$ which is braided about the standard (disk) open book decomposition for $(S^3,\xi_{std})$.  To this braid, we associate a multi-pointed Heegaard diagram $\SH_\rho$, as above.  We call a diagram obtained in this way a {\it braid diagram}. 

In the context of Heegaard Floer theory, braid diagrams first appeared in the work of Baldwin, V\'ertesi and the second author \cite{BVV}, and were used to establish an equivalence of transverse invariants in knot Floer homology.  Observe that the diagram $\SH_\rho$ supports a distinguished generator $\x[\SH_{\rho}] = \{t_1,\dots,t_{n-1}\}$, which is the union of the unique intersections between the $\alpha$ and $\beta$-curves contained on the disk $D \subset \Sigma$. It was shown in \cite{BVV} that the class $[\x[\SH_{\rho}]] \in \HFKh(-S^3,L)$ is an invariant of the transverse link $L$. This invariant is denoted $t(L)$ and is commonly referred to as the {\it BRAID invariant} of transverse knots.  The Maslov and Alexander gradings of $t(L)$ are given by
\[
	M(t(L)) = sl(L) + 1 \;\;\;\;\; \text{and} \;\;\;\;\; A(t(L)) = \frac{sl(L)+1}{2},
\]
where $sl(L)$ is the self-linking number of $L$.  The self-linking number of the closure of an $n$-braid $\rho$ is
\[sl(L) = wr(\rho) - n\]
where $wr(\rho)$ is the writhe.

In this paper, we work with the tilde version of knot Floer homology $\HFKt(-S^3,L)$ instead of $\HFKh(-S^3,L)$.  However, the generator $\x[\SH_{\rho}]$ determines a class in $\HFKt(-S^3,L)$ as well.  Moreover, there exists a canonical projection map $p: \HFKt(-S^3,L) \rightarrow \HFKh(-S^3,L)$ with corresponding section $s: \HFKh(-S^3,L) \rightarrow \HFKt(-S^3,L)$, such that $s(\x[\SH_{\rho}]) =  \x[\SH_{\rho}]$.


\section{Identifying the transverse invariant} 
\label{sec:identify_invt}

Throughout this section, we assume that a transverse link $L$ is given as the closure of a braid $\rho$.  By abuse of notation, we let $\rho$ also denote an efficient homeomorphism in the corresponding mapping class.

\subsection{The complexes $C_{i,j}$} 
\label{sub:the_complexes_c__i_j}

In this subsection, we inductively define a sequence of complexes $C_{i,j}$ and a sequence of chain-homotopy equivalences $F_{i,j}: \CFKt(\SH_{\rho}) \rightarrow C_{i,j}$.  The differentials on the complexes are defined analogously to those of $\CFKt$, except that the counts associated to each Whitney disk are determined combinatorially.  Similarly, the chain maps are defined analogously to standard triangle maps, except with modified counts for Whitney triangles.

Let $\SH_{\rho}$ be an efficient, multi-pointed Heegaard diagram for a braid closure.  Via the ``stabilization trick'', there is a sequence of multipointed Heegaard diagrams
\[
	\SH_{\rho},\SH_s,\SH_0,\SH_1,\dots,\SH_N = \SH_{nice}
\]
where
\begin{enumerate}
\item $\SH_s = (\Sigma_g,\betas_{\rho} \cup \widehat{\betas}, \alphas_{\rho} \cup \widehat{\alphas}, \zs,\ws)$ is obtained from $\SH_{\rho}$ by $g$ simultaneous stabilizations in neighborhoods of the appropriate $z$--basepoints on the disk $D$;
\item $\SH_0 = (\Sigma_g,\betas_0,\alphas_0,\zs,\ws)$ is obtained from $\SH_s$ by handlesliding across the $\widehat{\betas}$ curves so that each $\widehat{\alpha}_k$ intersects the original 2-sphere along an arc from the region containing $z_k$ to the unique hexagon in annulus bounded by $\alpha_{k-1}$ and $\alpha_k$; and
\item $\SH_{i} = (\Sigma_g,\betas_{i},\alphas_{0},\zs,\ws)$ is obtained from $\SH_{i-1}$ by an elementary isotopy of some $\widehat{\beta}_k$ across some $\alpha$ curve.  In addition, by a small Hamiltonian isotopy we can assume that each curve of $\betas_i$ intersects its corresponding curve in $\betas_0,\dots,\betas_{i-1}$ transversely in two points.
\end{enumerate}

Finally, let $\SH_{0'} = (\Sigma_g,\betas_{0'},\alphas,\zs,\ws)$ be a multi-pointed Heegaard diagram where the curves of $\betas_{0'}$ are small Hamiltonian isotopes of the curves of $\betas_0$ which each intersect their counterpart in $\betas_0,\dots,\betas_N$ transversely in two points.

\begin{figure}[htpb]
\labellist
	\hair 2pt
	\small
	\pinlabel $q$ at 252 30
	\pinlabel $b$ at 252 10
	\pinlabel $y'_i$ at 294 28
	\pinlabel $y_i$ at 213 27
	\pinlabel $\alpha_j$ at -6 18
	\pinlabel $\widehat{\beta}_k$ at -6 47
\endlabellist
\begin{center}
\includegraphics[scale=1.0]{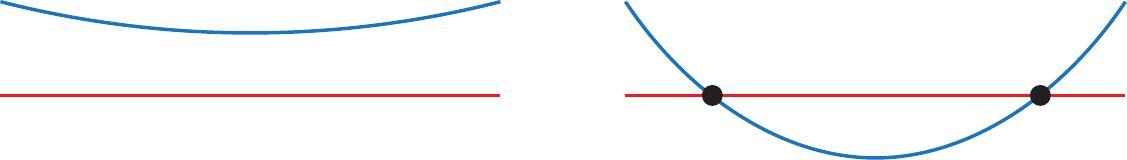}
\caption{An elementary isotopy introduces two intersection points to $\alphas \cap \betas$.}
\label{fig:bigon}
\end{center}
\end{figure}

Each isotopy move from $\SH_0$ to $\SH_{nice}$ introduces a pair of intersection points $y_i,y'_i$ to $\alphas \cap \betas$ as in Figure~\ref{fig:bigon}.  Let $D_i$ be the submodule spanned by generators with a vertex at $y_i$ or $y'_i$ and no vertices at any $y_j$ or $y'_j$ for $j > i$.  Then, for each $0 \leq i \leq N$, there is a decomposition of modules
 \[
 	\CFKt(\SH_i) = \CFKt(\SH_0) \oplus D_1 \oplus \cdots \oplus D_i.
\]
In particular, when $i = N$, we have $\CFKt(\SH_N) = \CFKt(\SH_{nice})$.  We additionally see that the modules $D_i$ have direct sum decompositions
\[ 
	D_i  = D_{i,1} \oplus \cdots \oplus D_{i,n_i}
\]
where $D_{i,j}$ is spanned by two generators $\x_{i,j},\x'_{i,j}$ satisfying $\x'_{i,j} = \left( \x_{i,j} \setminus \{y_i\} \right) \cup \{y'_i\}$.  Define an increasing sequence of submodules
\[
	C_{i,j} \coloneqq \CFKt(\SH_0) \oplus \bigoplus_{k = 1}^{i-1} D_k \oplus \bigoplus_{m = 1}^{j} D_{i,m}
\]
for $i = 1,\dots,N$ and $j = 1,\dots,n_i$.  Endow each intermediate module $C_{i,j}$ with the map
\[ 
	d_{i,j} (\x) \coloneqq \sum_{\y} \sum_{ \substack{ \phi \in \pi_2(\x,\y) \\ \mu(\phi) = 1 \\ n_z(\phi) = n_w(\phi) = 0}} \cN_{i,j}(\phi) \cdot \y
\]
where $\cN(\phi)$ is a modified count of representatives of the Whitney disk $\phi$ defined inductively as follows.  

First, we set $d_{N,n_N}$ on $C_{N,{n_N}} = \CFKt(\SH_{nice})$ to be exactly the differential on $\CFKt(\SH_{nice})$.  Thus, define
\[
	\cN_{N,n_N}(\phi) \coloneqq \# \widehat{\cM}(\phi).
\]
where $\cM(\phi)$ denotes the moduli of pseudo-holomorphic representatives.  
For each pair of intersection points $y_i,y'_i$ induced by the finger moves, there is a unique bigon 2--chain $B_i$ with corners at $y_i$ and $y'_i$.  This bigon determines a Whitney disk $B_{i,j} \in \pi_2(\x_{i,j},\x'_{i,j})$.  Inductively define the count $\cN_{i,j}$ by setting
\[ \cN_{i,j}(\phi) \coloneqq \cN_{i,j+1}(\phi) + \sum_{ \substack{\phi_1 \in \pi_2(\x_1, \y'_{i,j}) \\ \phi_2 \in \pi_2(\y_{i,j},\x_2) \\ \mu(\phi_1) = \mu(\phi_2) = 1 \\ \phi_1 - B_{i,j} + \phi_2 = \phi \\ n_z(\phi_1) = n_w(\phi_1) = 0 \\ n_z(\phi_2) = n_w(\phi_2) = 0}} \cN_{i,j+1}(\phi_1) \cdot \cN_{i,j+1}(\phi_2) \]
In defining the counts, we use the convention that $(i,n_i) \equiv (i+1,0)$.

Next, we define chain-homotopy equivalences between $\CFKt(\SH_0)$ and each $C_{i,j}$ by a modified count of Whitney triangles.  The three sets of curves $\betas_i,\betas,\alphas$, along with the basepoints, determine a Heegaard triple.  There is a unique generator $\Theta_i \in \TT_{\beta_0} \cap \TT_{\beta_i}$ of maximal Maslov grading, which is closed.  Define maps $F_{i,j}: \CFKt(\SH_0) \rightarrow C_{i,j}$ by the rule
\[
	F_{i,j}(\x) \coloneqq \sum_{\y \in \TT_{\alpha} \cap \TT_{\beta_i}} \sum_{\substack{ \psi \in \pi_2(\Theta_i,\x,\y) \\ \mu(\psi) = 0 \\ n_z(\psi) = n_w(\psi) = 0}} \cN_{i,j}(\psi) \cdot \y
\]
The counts $\cN_{i,j}$ are again defined inductively.  We set the map $F_{N,n_N}$ to be the induced chain homotopy equivalence from $\CFKt(\SH_0)$ to $\CFKt(\SH_{nice})$.  Thus, define
\[
	\cN_{N,n_N}(\psi) \coloneqq \# \widehat{\cM}(\psi)
	\]
We then define the triangle counts inductively using the formula
\[
	\cN_{i,j}(\psi) \coloneqq \cN_{i,j+1}(\psi) + \sum_{\substack{ \psi_1 \in \pi_2(\Theta,\x,\x'_{i,j}) \\ \phi_2 \in \pi_2(\x_{i,j},\y) \\ \mu(\phi_1) = 1; \mu(\psi_2) = 0 \\ \psi_1 - B_{i,j} + \phi_2 = \psi \\ n_z(\phi_1) = n_w(\phi_1) = 0 \\ n_z(\psi_2) = n_w(\psi_2) = 0}} \cN_{i,j+1}(\phi_1) \cdot \cN_{i,j+1}(\psi_2)
\]

We now state the main result of this subsection.

\begin{proposition}
\label{prop:combinatorial-complexes}
For any $i = 1,\dots,N$ and $j = 1,\dots,n_i$,
\begin{enumerate}
\item $(C_{i,j},d_{i,j})$ is a chain complex,
\item $F_{i,j}$ is a chain map, and
\item $F_{i,j}: \CFKt(\SH_0) \rightarrow C_{i,j}$ is a chain-homotopy equivalence.
\end{enumerate}
\end{proposition}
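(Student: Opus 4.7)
The plan is to prove all three statements simultaneously by descending induction on the index $(i,j)$, using the ordering defined by the convention $(i, n_i) \equiv (i+1, 0)$ and starting from $(N, n_N)$. For the base case $(i,j) = (N, n_N)$, the definition $\cN_{N, n_N} = \#\widehat{\cM}$ identifies $(C_{N, n_N}, d_{N, n_N})$ with the standard complex $\CFKt(\SH_{nice})$, and $F_{N, n_N}$ with the standard Heegaard Floer triangle map for the triple $(\alphas_0, \betas_0, \betas_N)$ relative to the top generator $\Theta_N$. Parts (a) and (b) reduce to the usual identities $\partial^2 = 0$ and $\partial F = F\partial$, and part (c) holds because $\betas_N$ is obtained from $\betas_0$ by a sequence of isotopies together with a small Hamiltonian perturbation, making $F_{N, n_N}$ a continuation map and hence a chain homotopy equivalence.

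For the inductive step, assume the proposition holds at level $(i, j+1)$. The subcomplex $C_{i, j}$ is obtained from $C_{i, j+1}$ by removing the summand $D_{i, j+1} = \FF\langle \x_{i, j+1}, \x'_{i, j+1}\rangle$, whose two generators are joined by the small embedded bigon $B$. The inductive recipe forces $\cN_{i, j+1}(B) = 1$, so $(\x_{i, j+1}, \x'_{i, j+1})$ is an acyclic pair in $(C_{i, j+1}, d_{i, j+1})$. The transition from $(i, j+1)$ to $(i, j)$ is then exactly \emph{Gaussian elimination} from homological algebra: given a chain complex with an acyclic pair $(a, b)$, one forms the complementary subcomplex with induced differential
\[
    d'(x) \;=\; d(x) + \bigl\langle d x, b\bigr\rangle \cdot \bigl(d(a) + b\bigr),
\]
and the inclusion of this subcomplex is a chain homotopy equivalence. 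Expanding this formula in terms of Whitney-disk domains, the correction terms are compositions $\phi_1$ into $\x'_{i, j+1}$, followed by $B$, followed by $\phi_2$ out of $\x_{i, j+1}$, with total domain $\phi = \phi_1 - B + \phi_2$, which recovers exactly the recursive formula for $\cN_{i, j}$. This gives (a) and (c). Part (b) is the analogous cancellation for a chain map $F : C_1 \to C_2$ with an acyclic pair in the target: projecting along the cancellation yields a new chain map whose triangle count is given by exactly the recursive formula for $F_{i, j}$, where $\psi_1$ is the triangle segment into $\x'_{i, j+1}$ and $\phi_2$ is the disk segment out of $\x_{i, j+1}$.

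The principal technical work lies in matching the combinatorial recursion to the abstract cancellation formulas. Three points need care: \emph{(i)} verifying $\cN_{i, j+1}(B) = 1$ by tracking the bigon's count through all earlier induction levels and checking that no spurious contributions from previously cancelled pairs interfere, using that $B$ is an embedded empty bigon disjoint from those pairs; \emph{(ii)} justifying the domain equation $\phi = \phi_1 - B + \phi_2$ as the correct additivity of domains under splicing disks through the cancelled pair, the subtraction of $B$ reflecting that the bigon has been ``zeroed out'' by the cancellation; and \emph{(iii)} ensuring the recursive sums are finite, which follows from admissibility of the Heegaard diagrams together with the Maslov-index constraints on $\phi_1$ and $\phi_2$. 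Once these bookkeeping points are settled, both the complex and chain-map statements at level $(i, j)$ follow immediately from the cancellation lemma applied to the data at level $(i, j+1)$.
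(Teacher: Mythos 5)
Your proposal is correct and follows essentially the same route as the paper: a descending induction from $(N,n_N)$ in which each step cancels the acyclic pair $(\x_{i,j+1},\x'_{i,j+1})$ via Gaussian elimination and identifies the resulting twisted differential and composed triangle map with the recursive definitions of $\cN_{i,j}$. Your technical point \emph{(i)} --- that $\cN_{i,j+1}(B)=1$ must be verified by tracking the bigon through earlier cancellation levels --- is precisely the content of the paper's Lemma~\ref{lemma:bigon-arrow}, which it proves using the positivity statement of Lemma~\ref{lemma:positive}.
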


This proposition follows easily from the following two lemmas.  First, we use a standard fact in homological algebra to contract differentials and simplify the complex.

\begin{lemma}[Cancellation Lemma]
\label{lemma:null-homotopy}
Let $(C \oplus D, d)$ be a chain complex with differential
\[d = 
\begin{bmatrix}
d_{CC} & d_{CD} \\
d_{DC} & d_{DD}
\end{bmatrix}
\]
such that $(D, d_{DD})$ is a contractible complex with null-homotopy $H: D \rightarrow D$.  Let $(C,d')$ be the complex with twisted differential 
\[
	d' = d_{CC} + d_{CD} \circ H \circ d_{DC}.
\]
Then, the maps $F: (C,d') \rightarrow (C \oplus D,d)$ and $G: (C \oplus D,d) \rightarrow (C,d')$ defined by
\[F \coloneqq \text{Id}_C \oplus H \circ d_{DC}  \qquad \qquad 
G \coloneqq \text{Id}_C + d_{CD} \circ H
\]
are chain-homotopy equivalences.
\end{lemma}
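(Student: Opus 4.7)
The plan is a direct algebraic verification using only the block identities that come from $d^{2}=0$ together with the nullhomotopy equation; throughout we work in characteristic two, so signs disappear. Expanding $d^{2}=0$ in block form yields
\[
d_{CC}^{2}+d_{CD}d_{DC}=0,\quad d_{CC}d_{CD}+d_{CD}d_{DD}=0,\quad d_{DC}d_{CC}+d_{DD}d_{DC}=0,\quad d_{DC}d_{CD}+d_{DD}^{2}=0.
\]
Since $(D,d_{DD})$ is itself a complex, $d_{DD}^{2}=0$ and the final identity reduces to $d_{DC}d_{CD}=0$. These four relations, together with $d_{DD}H+Hd_{DD}=\mathrm{Id}_{D}$, drive the entire argument.

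For part (1), I would expand $(d')^{2}=(d_{CC}+d_{CD}Hd_{DC})^{2}$: the quartic term $(d_{CD}Hd_{DC})^{2}$ vanishes because of $d_{DC}d_{CD}=0$, while the two cross terms, after applying $d_{CC}d_{CD}=d_{CD}d_{DD}$ and $d_{DC}d_{CC}=d_{DD}d_{DC}$, combine into $d_{CD}(d_{DD}H+Hd_{DD})d_{DC}=d_{CD}d_{DC}$, exactly cancelling $d_{CC}^{2}$. Analogous block manipulations establish $dF=Fd'$ and $d'G=Gd$, proving part (2).

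For part (3), I would exhibit explicit chain homotopies. A short computation, using $d_{DC}d_{CD}=0$, gives $FG(c,d)=(c+d_{CD}H(d),\,Hd_{DC}(c))$. Setting $K(c,d):=(0,H(d))$, one then has
\[
(dK+Kd)(c,d)=\bigl(d_{CD}H(d),\,(d_{DD}H+Hd_{DD})(d)+Hd_{DC}(c)\bigr)=\bigl(d_{CD}H(d),\,d+Hd_{DC}(c)\bigr),
\]
which matches $(\mathrm{Id}_{C\oplus D}+FG)(c,d)$ in characteristic two, so $FG\simeq\mathrm{Id}$. In the opposite direction, $GF(c)=c+d_{CD}H^{2}d_{DC}(c)$; in the geometric setting the nullhomotopy $H$ attached to the bigon $B_{i,j}$ sends $\x'_{i,j}\mapsto\x_{i,j}$ and kills everything else, so $H^{2}=0$ on the nose and $GF=\mathrm{Id}_{C}$ literally. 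In full generality $H^{2}$ commutes with $d_{DD}$ modulo $2$ and is therefore a chain endomorphism of the contractible complex $(D,d_{DD})$, hence nullhomotopic by some $H'$, and $K'(c):=d_{CD}H'd_{DC}(c)$ witnesses $GF\simeq\mathrm{Id}_{C}$.

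The main obstacle is the careful bookkeeping in parts (1) and (2), where each expansion produces several products that must be matched against the correct block identity in the right order. The only mild conceptual point is the residual $d_{CD}H^{2}d_{DC}$ in $GF-\mathrm{Id}_{C}$, which is harmless in the intended applications of the lemma and, in full generality, is killed by the contractibility of $(D,d_{DD})$.
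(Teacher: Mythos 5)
Your proof is correct and complete. Note that the paper itself offers no proof of this lemma --- it is invoked as ``a standard fact in homological algebra'' --- so there is no argument in the text to compare against; your direct block-matrix verification is the standard one. I checked the computations: the four identities from $d^2=0$ together with $d_{DD}^2=0$ (which is indeed part of the hypothesis that $(D,d_{DD})$ is a complex) give $d_{DC}d_{CD}=0$, and from there $(d')^2=0$, $dF=Fd'$, $d'G=Gd$, and $dK+Kd=\mathrm{Id}+FG$ all come out as you say. The one genuinely delicate point is the composite $GF=\mathrm{Id}_C+d_{CD}H^2d_{DC}$, which is \emph{not} the identity in general; a careless write-up would assert $GF=\mathrm{Id}_C$ and be wrong. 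You handle this correctly: $H^2$ commutes with $d_{DD}$, hence is a chain endomorphism of the contractible complex $(D,d_{DD})$ and is nullhomotopic via $H'=H^3$, and one checks (using $d_{DC}d_{CD}=0$ again) that $d'K'+K'd'=d_{CD}(d_{DD}H'+H'd_{DD})d_{DC}=d_{CD}H^2d_{DC}$, so $K'=d_{CD}H'd_{DC}$ witnesses $GF\simeq\mathrm{Id}_C$. Your side remark that $H^2=0$ in the paper's application (where $H$ sends $\x'_{i,j}\mapsto\x_{i,j}$ and kills $\x_{i,j}$) is also accurate, and explains why the authors can use $F_{i,j}=G_{i,j}\circ F_{i,j+1}$ without worrying about this correction term.
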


Second, we establish the existence of a sequence of differentials to contract.  

\begin{lemma}
\label{lemma:bigon-arrow}
For any $i = 1,\dots,N$ and $j = 1,\dots,n_i$, we have $\langle d_{i,j} (\x_{i,j}), \x'_{i,j} \rangle = 1$.
\end{lemma}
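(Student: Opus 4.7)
\emph{Proof plan.}  I would show that $B_{i,j}$ is the unique Whitney disk contributing to $\langle d_{i,j}(\x_{i,j}), \x'_{i,j}\rangle$ and that its modified count equals $\cN_{i,j}(B_{i,j}) = 1$.

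First, I eliminate competing classes.  Any $\phi \in \pi_2(\x_{i,j},\x'_{i,j})$ with $\mu(\phi) = 1$ and $n_z(\phi) = n_w(\phi) = 0$ differs from $B_{i,j}$ by a Maslov-index-zero periodic domain $P \in \Pi^0_{\alpha,\beta}$.  Admissibility of $\SH_\rho$, inherited by each intermediate diagram $\SH_m$, forces any nonzero $P$ to take both positive and negative values, so $B_{i,j} + P$ admits no positive $2$-chain representative.  Hence only $B_{i,j}$ can contribute.

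Second, I compute $\cN_{i,j}(B_{i,j})$ by downward induction along the recursion, starting from the base case $(N, n_N)$.  At the base, $\cN_{N, n_N}(B_{i,j}) = \#\widehat{\cM}(B_{i,j})$ in $\SH_{nice}$; the elementary bigon $B_{i,j}$ is embedded and empty in $\SH_i$, and since its local support is disjoint from the loci of all subsequent finger moves, it persists as an embedded empty bigon in $\SH_{nice}$, contributing exactly $1$.  For the inductive step, the recursion adds to $\cN_{i', j'+1}(B_{i,j})$ a correction summing over pairs $(\phi_1, \phi_2)$ of Maslov-index-one disks whose composition realizes $B_{i,j}$ through the bigon contracted at step $(i', j')$.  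Positivity of domains, together with the small local support of each elementary bigon, restricts each $\phi_r$ to a nonnegative combination of at most two elementary bigon regions; tracking boundaries shows that any such nontrivial domain effects only the $y_k \leftrightarrow y'_k$ transition at a single coordinate, which cannot reconcile the background discrepancy between the prescribed endpoints $\x_{i,j}$ and $\x'_{i', j'+1}$.  The only alternative (one $\phi_r$ trivial) fails the Maslov-index condition $\mu(\phi_r) = 1$.  The correction therefore vanishes at every step, and iterating down through the recursion yields $\cN_{i,j}(B_{i,j}) = 1$.

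The main obstacle is verifying uniformly across all recursion levels --- including the boundary transitions $(i', n_{i'}) \equiv (i'+1, 0)$, where the contracted bigon lies at a different $y, y'$ pair than $B_{i,j}$ --- that no pair $(\phi_1, \phi_2)$ satisfies the prescribed endpoint, Maslov-index, and domain constraints simultaneously.  The essential inputs are the small, mutually disjoint (or coincident) support of the elementary bigons $B_{k,\ell}$ and the algebraic fact that each such bigon's boundary affects only a single coordinate of the connected generators; this rigidity of the domain accounting collapses the recursion and reduces the lemma to the base-case count in $\SH_{nice}$.
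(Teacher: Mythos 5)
Your skeleton matches the paper's: show $B_{i,j}$ is the only class that can contribute, then show its modified count is $1$ by downward induction from the nice diagram. But two steps have genuine gaps. First, your elimination of competing classes does not follow from admissibility alone. If $\phi = B_{i,j} + P$ with $P$ a nonzero periodic domain, admissibility gives $P$ both positive and negative multiplicities, but $D(\phi)$ could still be positive if the negative part of $P$ is exactly cancelled by the bigon $B_i$. The paper closes this loophole with a separate homological argument: the $\beta$ curve introduced by stabilization, which forms part of $\partial B_i$, is linearly independent in $H_1(\Sigma)$ from the other $\alpha$ and $\beta$ curves, so it appears with multiplicity $0$ in the boundary of any periodic domain; hence no periodic domain can have negative part equal to $B_i$. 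Without this, your ``so $B_{i,j}+P$ admits no positive representative'' is a non sequitur. Second, you repeatedly invoke ``positivity of domains'' to restrict which classes and which decompositions can carry nonzero counts, but the counts $\cN_{i,j}$ are defined by a combinatorial recursion, not by holomorphic curves, so positivity is not automatic. In the paper this is a separate lemma (Lemma~\ref{lemma:positive}), proved by its own induction, and it is an essential input to both halves of the argument; your proposal uses it as if it were free.

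In the inductive step for the count, your reduction of each $\phi_r$ to a sub-2-chain of $B_i + B_{i'}$ is fine once positivity is available (since $D(\phi_1)+D(\phi_2) = B_i + B_{i'}$), but the case you then wave away --- ``mutually disjoint (or coincident) support'' of the elementary bigons --- is exactly the case that needs checking. A nontrivial decomposition exists precisely when a later finger move introduces the pair $y_{i'}, y'_{i'}$ on the $\alpha$ boundary arc of $B_i$ between $y_i$ and $y'_i$; the paper rules this out by inspecting the explicit isotopy in Figure~\ref{fig:stabilizedB}, not by a general bookkeeping argument. You have assumed the geometric fact rather than derived it. So the approach is the right one, but the proof as written is incomplete at the admissibility step, the positivity step, and the disjointness step.
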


We defer the proof of Lemma~\ref{lemma:bigon-arrow} until the end of this subsection.  Using Lemmas \ref{lemma:null-homotopy} and \ref{lemma:bigon-arrow}, we prove Proposition~\ref{prop:combinatorial-complexes}.

\begin{proof}[Proof of Proposition~\ref{prop:combinatorial-complexes}]
All three statements are clearly true for $i = N$ and $j = n_N$ since the differential and triangle map are defined by counting pseudo-holomorphic representatives.

To prove the proposition, suppose by induction that all three statements are true for $(i,j+1)$.  Let $d^D_{i,j+1}$ denote the restriction of $d_{i,j+1}$ to the submodule $D_{i,j+1}$.  Then we have that $d^D_{i,j+1} \x_{i,j+1} = \x'_{i,j+1}$ by Lemma~\ref{lemma:bigon-arrow}.  Consequently, $(D_{i,j+1},d^D_{i,j+1})$ is a contractible complex with null-homotopy $H_{i,j+1}$ defined by setting $H(\x'_{i,j+1}) = \x_{i,j+1}$.  Using the Cancellation Lemma (Lemma~\ref{lemma:null-homotopy}), we can contract $D_{i,j+1}$ to obtain a twisted differential on $C_{i,j}$ and a chain-homotopy equivalence $G_{i,j}: C_{i,j+1} \rightarrow C_{i,j}$.  This twisted differential is precisely $d_{i,j}$ and $F_{i,j} = G_{i,j} \circ F_{i,j+1}$.  All three statements are now clear for $(i,j)$.
\end{proof}

We now return to the proof of Lemma~\ref{lemma:bigon-arrow}.  In preparation, we show that, like the count of pseudo-holomorphic representatives, if the count $\cN_{i,j}$ associated to a Whitney disk or triangle is nonzero, then the corresponding domain in the Heegaard diagram is positive.

\begin{lemma}
\label{lemma:positive}
Let $\phi$ denote a Whitney disk in the diagram $\SH_i$, let $\psi$ denote a Whitney triangle in the multidiagram determined by $\betas_i,\betas_0,\alphas$, and let $B_i$ denote the bigon region in $\SH_i$ with corners at $y_i$ and $y'_i$.
\begin{enumerate}
\item If the count $\cN_{i,j}(\phi)$ is nonzero, then the 2-chain $D(\phi) + B_i$ is positive.
\item If the count $\cN_{i,j}(\psi)$ is nonzero, then the 2-chain $D(\psi) + B_i$ is positive.
\end{enumerate}
\end{lemma}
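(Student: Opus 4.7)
The plan is to proceed by reverse induction on $(i,j)$ in lexicographic order with base case $(N, n_N)$, treating parts (1) and (2) simultaneously since the triangle formula for $\cN_{i,j}(\psi)$ references disk counts. At the base case, $\cN_{N, n_N}(\phi) = \# \widehat{\cM}(\phi)$ and $\cN_{N, n_N}(\psi) = \# \widehat{\cM}(\psi)$, so any nonzero count forces $D(\phi) \geq 0$ and $D(\psi) \geq 0$ by the standard positivity of intersections for pseudo-holomorphic curves in symmetric products. Adding the positive bigon region $B_i$ then preserves positivity.

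For the inductive step from $(i, j+1)$ down to $(i, j)$, suppose $\cN_{i, j}(\phi) \neq 0$. The inductive formula expresses this count as $\cN_{i, j+1}(\phi)$ plus a sum of products $\cN_{i, j+1}(\phi_1) \cdot \cN_{i, j+1}(\phi_2)$ over decompositions $\phi = \phi_1 - B_{i,j} + \phi_2$. If the direct term is nonzero, the inductive hypothesis applies verbatim. Otherwise some pair $\phi_1, \phi_2$ makes the product nonzero, so by induction both $D(\phi_1) + B_i$ and $D(\phi_2) + B_i$ are positive. The 2-chain identity
\[
	D(\phi) + B_i = D(\phi_1) + D(\phi_2)
\]
then reduces the goal to showing $D(\phi_1) + D(\phi_2) \geq 0$ pointwise.

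The main obstacle is that the inductive bound $D(\phi_k) \geq -B_i$ by itself only bounds each of $\phi_1, \phi_2$ by $-1$ at the bigon region $B_i$, summing to $-2$, which is too weak. The plan to close this gap uses the corner conditions inherent in the decomposition: $\phi_1$ terminates at $\y'_{i,j}$ and $\phi_2$ originates at $\y_{i,j}$, so each has a boundary corner at a vertex of $B_i$. Analyzing the four local quadrants at each such corner, combined with the inductive positivity of $D(\phi_k) + B_i$, should force the local multiplicity $n_{B_i}(\phi_k)$ to be non-negative individually for $k = 1, 2$. This local quadrant analysis near $B_i$ is the most delicate step and will account for the bulk of the proof.

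The triangle case of (2) is handled by the same induction and local analysis: the analogous formula expresses $\cN_{i,j}(\psi)$ via a triangle $\psi_1$ ending at $\x'_{i,j}$ and a disk $\phi_2$ starting at $\x_{i,j}$, and the identity $D(\psi) + B_i = D(\psi_1) + D(\phi_2)$ leads to the same local corner analysis at $B_i$, now with one triangle corner replacing one of the disk corners.
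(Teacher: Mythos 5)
Your overall architecture matches the paper's: reverse lexicographic induction with base case $(N,n_N)$ handled by positivity of intersections, and in the inductive step the dichotomy between the direct term $\cN_{i,j+1}(\phi)$ and a decomposition $\phi = \phi_1 - B_{i,j} + \phi_2$, leading via the identity $D(\phi)+B_i = D(\phi_1)+D(\phi_2)$ to the need to control the multiplicities of $\phi_1,\phi_2$ on the bigon; you have also correctly diagnosed that the naive bound $D(\phi_k) \geq -B_i$ is too weak. However, the step you defer --- ``analyzing the four local quadrants at each such corner, combined with the inductive positivity'' --- does not close the gap as stated, and this is exactly where the paper's proof has its real content. At the corner of $\phi_1$ at $y'_i$, the quadrant multiplicities satisfy an alternating-sum relation $n_{Q_1}+n_{Q_3}-n_{Q_2}-n_{Q_4}=1$, with $Q_1$ the quadrant lying in $B_i$ and $Q_3$ opposite to it; the inductive hypothesis gives $n_{Q_2},n_{Q_3},n_{Q_4}\geq 0$ and $n_{Q_1}\geq -1$, and these constraints alone are consistent with $n_{Q_1}=-1$ (for instance $n_{Q_2}=n_{Q_4}=0$ and $n_{Q_3}=2$). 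So purely local corner data plus inductive positivity cannot force the multiplicity of $\phi_k$ on $B_i$ to be non-negative.

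The missing ingredient is global and specific to these diagrams: every $\alpha$ curve is adjacent to $z$-basepointed regions on both sides, so $n_{\zs}(\phi_k)=0$ forces the coefficient of $\partial_{\alpha}D(\phi_k)$ on each $\alpha$ curve to vanish somewhere; since that coefficient changes only at the (at most two) corners of $\phi_k$ on that curve, the multiplicity of $D(\phi_k)$ can jump by at most $1$ across any arc of any $\alpha$ curve. The paper then compares a point $b$ inside $B_i$ with the point $q$ directly across the $\alpha$-arc of the bigon: the corner condition at $y'_i$ (resp.\ $y_i$) fixes the sign of that jump, giving $n_b(\phi_k)\geq n_q(\phi_k)$, while $n_q(\phi_k)\geq 0$ follows from the inductive positivity of $D(\phi_k)+B_i$ because $B_i$ vanishes at $q$. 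This yields the stronger conclusion that $D(\phi_1)$ and $D(\phi_2)$ are themselves positive, hence so is their sum $D(\phi)+B_i$. With this $z$-adjacency argument supplied, your induction (including the parallel triangle case) goes through exactly as in the paper.
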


\begin{proof}
We begin by observing that if $D(\phi) + B_{i+1}$ is positive as a 2-chain in $\SH_{i+1}$, then $D(\phi)$ is positive as a 2-chain in $\SH_{i}$.

Also, both statements are clearly true for $(i,j) = (N,n_N)$ since both the differential and triangle maps are determined by counting holomorphic representatives.  If a holomorphic representative exists, then by positivity of intersection, the 2-chain corresponding to the Whitney disk or triangle must be positive.

Now, suppose by induction the statements are true for $(i,j+1)$.  If $\cN_{i,j}(\phi) \neq 0$, then either $\cN_{i,j+1}(\phi) \neq 0$, or there exist two domains $\phi_1,\phi_2$ such that $\phi_1 + \phi_2 - B_{i,j} = \phi$ and $\cN_{i,j+1}(\phi_1)$ and $\cN_{i,j+1}(\phi_2)$ are nonzero.

In the first case, then clearly $D(\phi) + B_i$ is positive by induction.   In the second case, we know by induction that $D(\phi_1) + B_i$ and $D(\phi_2) + B_i$ are both positive.  We will show the stronger statements that $D(\phi_1)$ and $D(\phi_2)$ are positive.  All $\alpha$ curves are adjacent to $z$-basepointed regions on both sides.  Since $n_{\z}(\phi_1)= 0$, this implies that the multiplicity of $D(\phi_1)$ can change by at most 1 across any segment of $\alphas$.  In particular, pick two points $b,q$ as in Figure~\ref{fig:bigon}.  Then $|n_b(\phi_1) - n_q(\phi_1)| \leq 1$.  Moreover, since $\phi_1$ has an outgoing corner at $y_i$, the multiplicities of $\phi_1$ must satisfy $n_b(\phi_1) \geq n_q(\phi_1)$.  However, since $D(\phi_1) + B_i$ is positive, this means that $n_q(\phi_1) \geq 0$.  Thus $n_b(\phi_1) \geq n_q(\phi_1) \geq 0$.  Consequently, the domain $D(\phi_1)$ is positive.  A corresponding argument shows that $D(\phi_2)$ is positive.  As a result, $D(\phi) + B_i = D(\phi_1) + D(\phi_2)$ is positive.

A similar inductive argument proves the statement for triangle counts.
\end{proof}

We now finish the proof of Lemma~\ref{lemma:bigon-arrow}.

\begin{proof}[Proof of Lemma~\ref{lemma:bigon-arrow}]
First, the domain $D(B_{i,j}) = B_i$ is clearly positive.  If $\phi \in \pi_2(\x_{i,j},\x'_{i,j})$ is any other Whitney disk, then $D(\phi - B_{i,j})$ is a periodic domain.  Since the diagram $\SH_i$ is admissible, this periodic domain has both positive and negative multiplicities.  Consequently, $D(\phi)$ is positive if and only if the negative component of $D(\phi - B_{i,j})$ is precisely $B_i$.  However, this is not possible.  If it was, then the boundary of the periodic domain must include the $\alpha$ and $\beta$ curves bounding $B_i$.  Yet, each $\beta$ curve introduced by stabilization is not a linear combination in $H_1(\Sigma_k)$ of the remaining $\alpha$ and $\beta$ curves.  Thus, it must show up with multiplicity $0$ in the boundary of any periodic domain.  In turn, $B_{i,j}$ is the unique Whitney disk in $\pi_2(\x_{i,j},\x'_{i,j})$ with a positive domain and, by Lemma~\ref{lemma:positive}, the only possible Whitney disk that could contribute to $\langle d_{i,j} (\x_{i,j}), \x'_{i,j} \rangle$.  Therefore, it sufficies to check that $\cN_{i,j}(B_{i,j}) = 1$.

The count satisfies $\cN_{N,n_N}(B_{i,j}) = \# \widehat{\cM}(B_{i,j}) = 1$ since embedded bigons have unique pseudo-holomorphic representatives.  Proceeding by induction, we see that $\cN_{i',j'}(B_{i,j}) = \cN_{i',j'+1}(B_{i,j})$ unless $B_{i,j}$ admits a decomposition $B_{i,j} = \phi_1 - B_{i',j'} + \phi_2$ for some Whitney disks with $\cN_{i',j'+1}(\phi_1) = \cN_{i',j'+1}(\phi_2) = 1$.  If this occurs, then by Lemma~\ref{lemma:positive}, the domains $D(\phi_1),D(\phi_2)$ are positive.  However, since $B_i$ is an embedded bigon, this implies a finger move introduces two new intersection points $y_{i'},y'_{i'}$ on the $\alpha$ boundary arc of $B_i$ between $y_i$ and $y'_i$.  But it is clear from Figure~\ref{fig:stabilizedB} that this never occurs in the isotopy.   Consequently, the count $\cN_{i',j'}(B_{i,j}) $ stays fixed at 1 for all $(i,j) \leq (i',j') \leq (N,n_N)$.  In particular, $\cN_{i,j}(B_{i,j}) = 1$.
\end{proof}


\subsection{Transverse invariant} 
\label{sub:transverse_invariant}
In this section, we identify the image of the transverse invariant $t(K)$ under the chain homotopy equivalences defined in the previous section.

Recall that $\x[\SH_{\rho}]$ is the generator of $\CFKt(\SH_{\rho})$ comprised of the distinguished intersections  $t_1,\dots,t_{n-1}$ that lie on the portion of the original Heegaard surface coming from $D$.  There are corresponding generators $\x[\SH_s]$ in $\CFKt(\SH_s)$ and $\x[\SH_0]$ in $\CFKt(\SH_0)$ specified by the distinguished intersections along with the unique intersection points $\widehat{t_k} \in \widehat{\alpha_k} \cap \widehat{\beta_k}$.  Finally, for $i = 1,\dots,N$, let $T(\SH_i) \subset \CFKt(\SH_i)$ denote the subspace spanned by generators whose vertices along the curves $\alpha_1,\dots,\alpha_{n-1}$ are precisely $t_1,\dots,t_{n-1}$. 

\begin{figure}[htpb]
\centering
\labellist
	\small\hair 2pt
	\pinlabel $t_i$ at 52 43
	\pinlabel $t'_i$ at 88 28
	\pinlabel $\theta_i$ at 45 10
	\pinlabel $\alpha_i$ at 154 35
	\pinlabel $\beta'_i$ at 154 58
	\pinlabel $\beta_i$ at 154 73
	\pinlabel $z_{i-1}$ at 40 63
	\pinlabel $z_i$ at 95 10
\endlabellist
\includegraphics[scale=1.0]{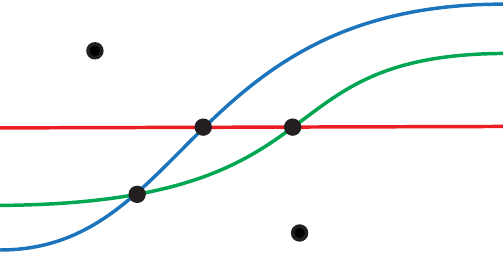}
\caption{All triangles with $n_z(\psi) = 0$ and corners at $t_i$ and $\theta_i$ must have a third corner at $t'_i$.}
\label{fig:triangle}
\end{figure}

\begin{lemma}
\label{lemma:transverse-subcomplex}
Let $\x[\SH_{\rho}],\x[\SH_s],$ and $\x[\SH_0]$ be the distinguished generators in $\SH_{\rho},\SH_s$, and $\SH_0$, respectively, and let $T(\SH_i)$ be the subspace spanned by generators containing the distinguished intersections $\{t_i \in \alpha_i \cap \beta_i\}$.  Then
\begin{enumerate}
\item the chain homotopy equivalences $f_s: \CFKt(\SH_{\rho}) \rightarrow \CFKt(\SH_s)$ and $f_h: \CFKt(\SH_s) \rightarrow \CFKt(\SH_0)$ satisfy
\[
	f_s(\x[\SH_{\rho}]) = \x[\SH_s] \qquad f_h(\x[\SH_s]) = \x[\SH_0];
\]
\item for all $i = 0,\dots,N$, the subspace $T(\SH_i)$ is a subcomplex of $\CFKt(\SH_i)$;
\item the chain homotopy equivalence $f: \CFKt(\SH_0) \rightarrow \CFKt(\SH_N)$ satisfies
\[
	f(\x[\SH_0]) \subset T(\SH_{N});
\]
\item for all $i,j$, the subspace $T(\SH_{i+1}) \cap C_{i,j}$ is a subcomplex of $C_{i,j}$; and
\item for all $i,j$, the chain homotopy equivalence $G_{i,j}: C_{i,j+1} \rightarrow C_{i,j}$ satisfies
\[G_{i,j}( T(\SH_{i+1}) \cap C_{i,j+1}) \subset T(\SH_{i+1}) \cap C_{i,j}\]
\end{enumerate}
\end{lemma}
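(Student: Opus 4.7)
The plan is to handle the five claims in order, with parts (2) and (3) carrying the real geometric content and parts (1), (4), (5) reducing to either a direct identification or to bookkeeping driven by the Cancellation Lemma. For (1), I verify that $f_s$ is the standard stabilization chain equivalence: each simultaneous stabilization introduces a local pair $\widehat{\alpha}_k,\widehat{\beta}_k$ meeting in the single transverse intersection $\widehat{t}_k$, and the induced map on complexes tensors every generator with $\widehat{t}_k$, so $f_s(\x[\SH_\rho]) = \x[\SH_s]$ by construction. For the handleslide map $f_h$ I would work in the Heegaard triple $(\betas_\rho \cup \widehat{\betas},\,\betas_0,\,\alphas_\rho \cup \widehat{\alphas})$ and locate, near each $\widehat{t}_k$, a unique small holomorphic triangle from $\Theta \otimes \x[\SH_s]$ to $\x[\SH_0]$ with $n_{\zs}=n_{\ws}=0$; any competing triangle whose top $\alphas$-corner lies elsewhere is forced by positivity to cover an adjacent basepointed region, which yields $f_h(\x[\SH_s]) = \x[\SH_0]$.

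For (2), the core observation is local. Near each distinguished intersection $t_k \in \alpha_k \cap \beta_k$ on the disk $D$, both regions one enters by moving the $\alpha_k$-coordinate off $t_k$ along $\beta_k$ contain a $z$-basepoint, because $\alpha_k$ was constructed from a vertical arc separating $z_{k-1}$ from $z_k$. Consequently any Whitney disk $\phi$ with $n_{\zs}(\phi) = 0$ and a corner at $t_k$ has vanishing multiplicity in those regions and therefore cannot move the $\alpha_k$-coordinate; applying this at every $k$ shows $T(\SH_i)$ is a subcomplex. The isotopies producing $\SH_i$ from $\SH_0$ are supported near the $\widehat{\betas}$ curves and never alter the regions adjacent to any $t_k$, so the local picture persists for all $i$. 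Claim (3) then follows from the same local analysis applied to triangles: by positivity, any triangle counted by $f$ with top vertex $\x[\SH_0]$ and $n_{\zs}=n_{\ws}=0$ has its $\TT_\alpha$-corner on each $\alpha_k$ pinned at $t_k$, so the image is contained in $T(\SH_N)$.

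Finally, I address (4) and (5) together by downward induction on $(i,j)$, with base case $(N,n_N)$ supplied by (2) and (3). The two generators $\x_{i,j+1},\x'_{i,j+1}$ spanning $D_{i,j+1}$ share the same coordinates on $\alpha_1,\ldots,\alpha_{n-1}$, differing only in the vertex $y_i$ versus $y'_i$ that lies on one of the $\widehat{\alphas}$ curves. Hence $D_{i,j+1}$ lies entirely inside or entirely outside the $T$-condition, so the splitting $C_{i,j+1} = C_{i,j} \oplus D_{i,j+1}$ is compatible with that condition. The formulas $d_{i,j} = d_{CC} + d_{CD} \circ H \circ d_{DC}$ and $G_{i,j} = \mathrm{Id}_{C_{i,j}} + d_{CD} \circ H$ furnished by the Cancellation Lemma are assembled from entries of $d_{i,j+1}$, which preserves the $T$-condition by the inductive hypothesis, so $d_{i,j}$ and $G_{i,j}$ preserve it as well.

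The main obstacle is the local verification at each $t_k$ used in (2) and (3): one has to inspect the picture carefully enough in every intermediate diagram $\SH_i$ to confirm that the flanking $z$-basepointed regions survive the finger-move isotopies of the stabilization trick. Once that local geometry is established, the remaining inductive arguments for (4) and (5) amount to bookkeeping against the Cancellation Lemma already invoked in the proof of Proposition~\ref{prop:combinatorial-complexes}.
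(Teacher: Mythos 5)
Your proposal is correct and follows essentially the same route as the paper: the obvious stabilization identification plus the small-triangle count for (1) and (3), the ``any domain with an outgoing corner at a $t_k$ must cross a $z$-basepoint'' argument for (2), and Cancellation-Lemma bookkeeping for (5). The only deviations are cosmetic: the paper gets (4) by applying the same basepoint argument directly to $d_{i,j}$ (immediate, since $d_{i,j}$ by definition only counts disks with $n_{\zs}=n_{\ws}=0$) rather than via your induction through the Cancellation Lemma, and your parenthetical that $y_i,y'_i$ lie on a $\widehat{\alpha}$ curve is off --- they are created on $\alpha_j\cap\widehat{\beta}_k$ as in Figure~\ref{fig:bigon} --- though your conclusion that $D_{i,j+1}$ is homogeneous for the $T$-condition still holds either way.
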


\begin{proof}
Statements (2) and (4) follow from the same argument that $\x[\SH_{\rho}]$ is closed.  Any domain with an outgoing corner at some vertex of $\x[\SH_{\rho}]$ must cross one of the $z$ basepoints and is therefore excluded from the differential.  Statement (5) now follows immediately from Statement (4) and the definition of the map $G_{i,j}$ in Lemma~\ref{lemma:null-homotopy}.

Next, there is an obvious identification $f_s$ of the generators of $\SH_{\rho}$ and $\SH_s$ and since the stabilizations are performed in neighborhoods of $z$--basepoints, the complexes $\CFKt(\SH_{\rho})$ and $\CFKt(\SH_s)$ have identical differentials.  Thus $f_s$ is a chain map that sends $\x[\SH_{\rho}]$ to $\x[\SH_s]$.  The chain homotopy equivalences $f_h$ and $f$ are determined by counting triangles.  However, any triangle with vertices at $t_i$ and $\theta_i$ in Figure~\ref{fig:triangle} that misses the $z$ basepoints must also have a third corner at $t'_i$.  There is a unique such triangle, and it has a unique holomorphic representative.  This proves Statements (1) and (3).
\end{proof}

This suffices to show that $F_{0,0}( \x[\SH_{0}])$ is a scalar multiple of $\x[\SH_{0'}]$.  Thus, the transverse invariant is $0$ if $[\x[\SH_{0'}]] = 0$.  To finish the proof of (main theorem), we need to prove the converse as well.

\begin{proposition}
\label{prop:GF-transverse}
The chain-homotopy equivalence $F_{0,0}$ satisfies
\[F_{0,0}(\x[\SH_{0}]) = \x[\SH_{0'}]\]
\end{proposition}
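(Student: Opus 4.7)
The plan is to combine the remark preceding the proposition with a positivity-based uniqueness argument and an inductive calculation of triangle counts, mirroring the structure of the proof of Lemma~\ref{lemma:bigon-arrow}. The remark, together with parts (3) and (5) of Lemma~\ref{lemma:transverse-subcomplex}, already reduces the proposition to showing that the coefficient of $\x[\SH_{0'}]$ in $F_{0,0}(\x[\SH_0])$ equals $1$. By definition, this coefficient equals
\[
\sum_{\substack{\psi \in \pi_2(\Theta_0,\,\x[\SH_0],\,\x[\SH_{0'}]) \\ \mu(\psi) = 0,\; n_\zs(\psi) = n_\ws(\psi) = 0}} \cN_{0,0}(\psi).
\]
There is a distinguished ``small'' triangle $\psi_0$ in this indexing set coming from the Hamiltonian isotopy between $\betas_0$ and $\betas_{0'}$: its domain is the disjoint union of the small triangles through $\Theta_0$ at each vertex pairing, supported in small disk neighborhoods of the intersections making up $\x[\SH_0]$ and $\x[\SH_{0'}]$, and in particular disjoint from every bigon region $B_i$ produced by the finger moves.

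The first step is to show that $\psi_0$ is the only triangle contributing to the sum. By the triangle analog of Lemma~\ref{lemma:positive}, any $\psi$ with $\cN_{0,0}(\psi) \neq 0$ has positive domain. Any two such triangles differ by a periodic domain in the triple $(\betas_{0'},\betas_0,\alphas)$; admissibility of this triple, together with the observation used in the proof of Lemma~\ref{lemma:bigon-arrow} that each $\widehat{\beta}$ curve introduced by stabilization must appear with multiplicity $0$ in the boundary of any periodic domain, forces the difference to vanish. Hence the only $\psi$ contributing is $\psi_0$, and it remains to compute $\cN_{0,0}(\psi_0) = 1$.

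The second step is this equality, which I would establish by downward induction on $(i,j)$. The base case $\cN_{N,n_N}(\psi_0) = \#\widehat{\cM}(\psi_0) = 1$ holds because the small Hamiltonian triangle admits a unique pseudo-holomorphic representative. For the inductive step, the recursive definition gives
\[
\cN_{i,j}(\psi_0) \;=\; \cN_{i,j+1}(\psi_0) \;+\; \sum_{\psi_1 - B_{i,j} + \phi_2 = \psi_0} \cN_{i,j+1}(\psi_1)\,\cN_{i,j+1}(\phi_2),
\]
and it suffices to check that no decomposition on the right contributes. By Lemma~\ref{lemma:positive}, any contributing pair $\psi_1,\phi_2$ would yield positive $2$--chains $D(\psi_1) + B_i$ and $D(\phi_2) + B_i$. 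But since the support of $\psi_0$ is disjoint from a neighborhood of $B_i$, a local multiplicity analysis at the pair of points $b,q$ on the $\alpha$--arc of $B_i$ (identical in spirit to the positivity argument in the proof of Lemma~\ref{lemma:bigon-arrow}) forces a negative multiplicity in $D(\psi_1)$ or $D(\phi_2)$, a contradiction.

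The principal obstacle is this last local analysis: one must rule out, for every $(i,j)$, all non-trivial decompositions $\psi_0 = \psi_1 - B_{i,j} + \phi_2$. The argument relies on the geometric fact that the distinguished intersections $t_1,\dots,t_{n-1}$ anchoring $\psi_0$ lie strictly in the original disk $D$, separated from every stabilization and finger-move neighborhood; this isolation is what ultimately enforces the positivity contradiction and collapses the induction to $\cN_{0,0}(\psi_0) = 1$.
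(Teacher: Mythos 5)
Your overall architecture matches the paper's: reduce via Lemma~\ref{lemma:transverse-subcomplex} to computing a single coefficient, exhibit the small triangle $\psi_{small}$ coming from the Hamiltonian isotopy, show it is the unique triangle with positive domain, and then run a downward induction showing $\cN_{i,j}(\psi_{small})$ stays equal to $1$ because no decomposition through a bigon $B_{i,j}$ is ever geometrically possible. The base case and the inductive step of your count are essentially the paper's argument.

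However, your uniqueness step contains a genuine error. You assert that admissibility of the triple, together with the fact (from the proof of Lemma~\ref{lemma:bigon-arrow}) that each $\widehat{\beta}$ curve appears with multiplicity $0$ in the boundary of any periodic domain, ``forces the difference to vanish.'' Neither half of this works in the triple diagram $(\betas_{0'},\betas_0,\alphas)$. First, admissibility never forces a periodic domain to be zero --- it only guarantees mixed signs --- and the group of triply-periodic domains avoiding basepoints is nontrivial here, so two distinct triangles in $\pi_2(\Theta,\x[\SH_0],\x[\SH_{0'}])$ genuinely can differ by a nonzero periodic domain; what must be ruled out is that any \emph{other} triangle has a positive domain. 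Second, the $\widehat{\beta}$-multiplicity observation is special to the double diagrams $(\betas_i,\alphas)$, where each $\widehat{\beta}_k$ occurs exactly once among the attaching curves: in the triple, each such curve occurs twice (once in $\betas_0$ and once as its Hamiltonian translate in $\betas_{0'}$), the two copies are homologous, and so they can appear in $\partial P$ with cancelling multiplicities. The paper's proof avoids this by instead using the curves $\widehat{\alpha}_k$, which occur only once among the three attaching sets and are linearly independent in $H_1(\Sigma_g)$ from all remaining curves, hence must have multiplicity $0$ in the boundary of any periodic domain; positivity of a putative second triangle would force some $\widehat{\alpha}_k$ to appear nontrivially, giving the contradiction. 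Your argument needs to be repaired by substituting the $\widehat{\alpha}_k$ for the $\widehat{\beta}$ curves and replacing ``the difference vanishes'' with ``no other triangle has a positive domain.''
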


\begin{proof}
By Lemma~\ref{lemma:transverse-subcomplex}, we have that
\[
	F_{0,0}( \x[\SH_{0}]) = \sum_{\substack{ \psi \in \pi_2(\Theta,\x[\SH_{0}],\x[\SH_{0'}]) \\ \mu(\psi) = 0 \\ n_z(\psi) = n_w(\psi) = 0}} \cN_{0,0}(\psi) \cdot \x[\SH_{0'}] 
\]
Thus, we need to determine which domains in $\pi_2(\Theta,\x[\SH_0],\x[\SH_{0'}])$ contribute to the triangle map.  The proof is similar to the proof of Lemma~\ref{lemma:bigon-arrow}.  Specifically, there is a unique positive domain $D(\psi_{small})$, its contribution to $F_{N,n_N}$ is exactly $1$, and its contribution to each successive map $F_{i,j}$ can never deviate from $1$.

First, since $\betas_{0'}$ consists of small Hamiltonian isotopes of $\betas_0$, there is a obvious small triangle $\psi_{small}$ in $\pi_2(\Theta,\x[\SH_0],\x[\SH_{0'}])$.  It appear as an embedded domain for every triple $\betas_i,\betas_0,\alpha$ for $i=0',1,\dots,N$.  If $\psi'$ is any other triangle in $\pi_2(\Theta,\x[\SH_0],\x[\SH_{0'}])$, then $D(\psi_{small} - \psi')$ is a multi-periodic domain.  If the domain $D(\psi')$ is positive, then its boundary must include some curve $\widehat{\alpha}_k$ with nonzero multiplicity.  But each $\widehat{\alpha}_k$ is linearly independent in $H_1(\Sigma_g)$ from the remaining $\alphas$ and $\betas$ curves.  This gives a contradiction so $D(\psi_{small})$ is the unique positive domain.

Secondly, in the triple $\betas_N,\betas_0,\alphas$, the domain appears and clearly has a unique holomorphic representative.  Thus $\cN_{N,n_N}(\psi_{small}) = 1$.  Moreover, at no point in the isotopy from $\SH_{0'}$ to $\SH_N$ does the triangle $\psi_{small}$ decompose into the sum of two positive domains $\psi$ and $\phi$.  This would require a finger move of some $\beta$ curve across the $\widehat{\alpha}_k$ boundary arc of $\psi_{small}$.  but such a finger move never happens in the isotopy.  Consequently, we must have that $\cN_{i',j'}(\psi_{small}) = 1$ for all $(i',j')$.
\end{proof}


\bibliographystyle{alpha}
\nocite{*}
\bibliography{References}

\end{document}